\newcommand{\C}{\mathbb{C}}
\newcommand{\Z}{\mathbb{Z}}
\newcommand{\Hy}{\mathbb{H}}
\begin{document}

\markboth{Ji-Young Ham, Joongul Lee}
{Volumes of $C(2n,3)$}

\catchline{}{}{}{}{}

\title{The volume of hyperbolic cone-manifolds of the knot with Conway's notation $C(2n, 3)$
}

\author{Ji-Young Ham 
\footnote{This work was supported by Basic Science Research Program through the National Research Foundation of Korea (NRF) funded by the Ministry of Education, Science and Technology (No. NRF-2008-341-C00004 and NRF-R01-2008-000-10052-0).}
}

\address{Department of Science, Hongik University, 
94 Wausan-ro, Mapo-gu, Seoul,
 04066,\\
 Department of Mathematical Sciences, Seoul National University, 
 1 Gwanak-ro, Gwanak-gu, Seoul 08826,\\
  Korea\\
   jiyoungham1@gmail.com}

\author{Joongul Lee
\footnote{This work was supported by 2015 Hongik University Research Fund.}
}

\address{Department of Mathematics Education, Hongik University, 
94 Wausan-ro, Mapo-gu, Seoul,
04066,\\
   Korea\\
   jglee@hongik.ac.kr}

\maketitle

\begin{abstract}
Let $C(2n, 3)$ be the family of two bridge knots of slope $(4n+1)/(6n+1)$. We calculate the volumes of the $C(2n, 3)$ cone-manifolds using the Schl\"{a}fli formula. We present the concrete and explicit formula of them. We apply the general instructions of Hilden, Lozano, and Montesinos-Amilibia and extend the Ham, Mednykh, and Petrov's methods. As an application, we give the volumes of the cyclic coverings over those knots. For the fundamental group of $C(2n, 3)$, we take and tailor Hoste and Shanahan's. As a byproduct, we give an affirmative answer for their question whether their presentation is actually derived from Schubert's canonical 2-bridge diagram or not.
\end{abstract}

\keywords{hyperbolic orbifold, hyperbolic cone-manifold, volume, $C(2n,3)$, orbifold covering, Riley-Mednykh polynomial.}

\ccode{Mathematics Subject Classification 2010: 57M25, 57M27}

\section{Introduction}

Let us denote $C(2n,3)$ by $T_{2n}$ and  the exterior of $T_{2n}$ by $X_{2 n}$. By Mostow-Prasad rigidity, $X_{2n}$ has a unique hyperbolic structure. 
Let $\rho_{\infty}$ be the holonomy representation from $\pi_1(X_{2n})$ to $PSL(2,C)$ and denote $\rho_{\infty} \left(\pi_1(X_{2n})\right)$ by $\Gamma$, a Kleinian group.
$X_{2n}$ is a $(PSL(2,C),\Hy^3)$-manifold and can be identified with 
$\Hy^3/ \Gamma$. 
Thurston's orbifold theorem guarantees an orbifold, $X_{2n}(\alpha)$, with $T_{2n}$ as the singular locus and the cone-angle 
$\alpha=2 \pi/k$ for some nonzero integer $k$ can be identified with 
$\Hy^3/\Gamma^{\prime}$ for some $\Gamma^{\prime} \in PSL(2,C)$; the hyperbolic structure of $X_{2n}$ is deformed to the hyperbolic structure of 
$X_{2n}(\alpha)$.
For the intermediate angles whose multiples are not $2 \pi$ and not bigger than $\pi$, Kojima~\cite{K1} showed that the hyperbolic structure of 
$X_{2n}(\alpha)$ can be obtained uniquely by deforming nearby orbifold structures.
Note that there exists an angle $\alpha_0 \in [\frac{2\pi}{3},\pi)$ for each $T_{2n}$ such that $X_{2n}(\alpha)$ is hyperbolic for $\alpha \in (0, \alpha_0)$, Euclidean for $\alpha=\alpha_0$, and spherical for $\alpha \in (\alpha_0, \pi]$ \cite{P2,HLM1,K1,PW}. 
For further knowledge of cone manifolds you can consult~\cite{CHK,HMP}.

Explicit volume formulae for hyperbolic cone-manifolds of knots and links are known a little. The volume formulae for hyperbolic cone-manifolds of the knot 
$4_1$~\cite{HLM1,K1,K2,MR1}, the knot $5_2$~\cite{M2}, the link $5_1^2$~\cite{MV1}, 
the link $6_2^2$~\cite{M1}, and the link $6_3^2$~\cite{DMM1} have been computed. In~\cite{HLM2} a method of calculating the volumes of two-bridge knot cone-manifolds were introduced but without explicit formulae. In~\cite{HMP}, explicit volume formulae for hyperbolic twist knot cone-manifolds are computed. Similar methods are used for computing Chern-Simons invariants of twist knot orbifolds in~\cite{HL}.

The main purpose of the paper is to find an explicit and efficient volume formula for hyperbolic cone-manifolds of the knot $T_{2n}$. The following theorem gives the volume formula for 
$X_{2n}(\alpha)$.
Note that if $2 n$ of $T_{2n}$ is replaced by an odd integer, then $T_{2n}$ becomes a link with two components. Also, note that the volume of hyperbolic cone-manifolds of the knot with Conway's notation $C(-2n, -3)$ is the same as that of the knot with Conway's notation $C(2n,3)$. For the volume formula, since the knot $T_{2n}$ has to be hyperbolic, we exclude the case when $n=0$.

\begin{theorem}\label{thm:main}
Let $X_{2n}(\alpha)$, $0 \leq \alpha < \alpha_0$ be the hyperbolic cone-manifold with underlying space $S^3$ and with singular set $T_{2n}$ of cone-angle 
$\alpha$. Then the volume of $X_{2n}(\alpha)$ is given by the following formula

\begin{align*}
\text{\textnormal{Vol}} \left(X_{2n}(\alpha)\right) &= \int_{\alpha}^{\pi} \log \left|\frac{M^{-2}+x}{ M^{2}+x}\right| \: d\alpha,
\end{align*}

\noindent where 
$x$ with $\text{\textnormal{Im}}(x) \leq 0$ is a zero of the Riley-Mednykh polynomial $P_{2n}=P_{2n}(x,M)$ which is given recursively by 

\medskip
\begin{equation*}
P_{2n} = \begin{cases}
 Q P_{2(n-1)} -M^8 P_{2(n-2)} \ \text{if $n>1$}, \\
 Q P_{2(n+1)}-M^8 P_{2(n+2)} \ \text{if $n<-1$},
\end{cases}
\end{equation*}
\medskip

\noindent with initial conditions
\begin{equation*}
\begin{split}
P_{-2}  & =M^2 x^2+\left(M^4- M^2+1\right) x+M^2,\\
P_{0}  & =M^{-2} \ \text{\textnormal{for}} \ n < 0 \qquad \text{\textnormal{and}} \qquad P_{0} (x,M)  =1 \ \text{\textnormal{for}} \ n > 0,\\    
P_{2}  & =-M^4 x^3+\left(-2M^6+M^4-2 M^2\right) x^2+\left(-M^8+M^6-2 M^4+ M^2-1\right) x+M^4, \\
\end{split}
\end{equation*}
\noindent and $M=e^{\frac{i \alpha}{2}}$ and
$$Q=-M^4 x^3+\left(-2M^6+2 M^4-2M^2\right) x^2+\left(-M^8+2M^6-3M^4+2M^2-1\right) x+2M^4.$$
\end{theorem}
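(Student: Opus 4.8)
The plan is to apply the Schl\"afli formula for cone-manifolds, which asserts that as the cone-angle $\alpha$ varies, $\frac{d}{d\alpha}\,\mathrm{Vol}(X_{2n}(\alpha)) = -\frac{1}{2}\,\ell(\alpha)$, where $\ell(\alpha)$ is the (complex) length of the singular geodesic $T_{2n}$. So the first task is to set up a parametrized family of holonomy representations $\rho_\alpha\colon \pi_1(X_{2n}) \to PSL(2,\mathbb{C})$ for the cone-manifold structures, compute the trace of the image of the meridian (which is fixed by the cone-angle, $\mathrm{tr} = 2\cos(\alpha/2)$, i.e.\ eigenvalue $M = e^{i\alpha/2}$) and the trace of a longitude-type element, and extract from these the complex length $\ell(\alpha)$ as a function of the representation variables. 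Following Hoste--Shanahan's presentation of $\pi_1(X_{2n})$ (two meridian generators $a,b$ and the recursively-defined relator coming from the $C(2n,3)$ two-bridge structure), I would write the representation in the standard Riley normal form $a \mapsto \begin{pmatrix} M & 1 \\ 0 & M^{-1}\end{pmatrix}$, $b \mapsto \begin{pmatrix} M & 0 \\ 2-y & M^{-1}\end{pmatrix}$ for a suitable parameter, and translate the single group relation into a polynomial equation $P_{2n}(x,M)=0$ in the trace parameter $x$. The recursion $P_{2n} = Q\,P_{2(n-1)} - M^8 P_{2(n-2)}$ should fall out of the recursive structure of the $C(2n,3)$ two-bridge word together with a Cayley--Hamilton / Chebyshev-type identity for the repeated block, and verifying the base cases $P_{-2}, P_0, P_2$ is a direct matrix computation.

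The key steps, in order, are: (i) fix the Hoste--Shanahan group presentation for $\pi_1(X_{2n})$ and confirm that it is the one coming from Schubert's canonical $2$-bridge diagram of slope $(4n+1)/(6n+1)$ (this answers their question and is needed to identify the longitude correctly); (ii) parametrize $PSL(2,\mathbb{C})$-representations with meridian eigenvalue $M$, reduce the relator to the Riley-type polynomial, and establish the recursion and initial conditions for $P_{2n}$ by induction on $n$; (iii) using the explicit matrices, compute $\mathrm{tr}\,\rho_\alpha(\text{longitude})$ and hence derive the closed form for the complex length, obtaining $\cosh(\ell(\alpha)/2)$ or equivalently the eigenvalue of the longitude as a rational function of $x$ and $M$; the formula inside the integrand, $\log\bigl|\frac{M^{-2}+x}{M^{2}+x}\bigr|$, should emerge as $-\tfrac12\,\mathrm{Re}(\ell(\alpha))$ up to the standard identifications; (iv) integrate the Schl\"afli relation from $\alpha$ to $\pi$, using that $\mathrm{Vol}(X_{2n}(\pi)) = 0$ (the orbifold degenerates, or one integrates to the complete structure appropriately) to fix the constant of integration, and check that the correct root of $P_{2n}$ — the one with $\mathrm{Im}(x)\le 0$ that deforms from the geometric representation of the complete structure — is selected so that the integrand is the right branch.

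The main obstacle I expect is Step (iii): extracting a clean, manageable closed-form expression for the complex length of the singular locus from the recursively-defined relator. The longitude is a long word in $a,b$ whose length grows with $n$, so one cannot just multiply matrices; instead one must exploit the recursion for $P_{2n}$ (and likely auxiliary recursions for the associated Chebyshev-like polynomials in the trace of the repeated syllable) to collapse $\mathrm{tr}\,\rho_\alpha(\lambda)$ into something that, after using $P_{2n}(x,M)=0$ to eliminate high powers of $x$, reduces to the simple ratio $\frac{M^{-2}+x}{M^{2}+x}$. This is exactly where the Ham--Mednykh--Petrov technique for twist knots has to be adapted to the $C(2n,3)$ family: the twist-knot computation has a single twist region, whereas here the Conway block $C(2n,3)$ forces a two-parameter recursive bookkeeping, and making that reduction explicit — rather than merely "in principle" as in Hilden--Lozano--Montesinos-Amilibia — is the technical heart of the argument. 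A secondary subtlety is the branch-of-logarithm / sign analysis needed to be sure the integrand equals $-\tfrac12\,\mathrm{Re}\,\ell$ uniformly on $[\alpha,\pi)$ and that the chosen root varies continuously, which requires tracking the distinguished component of the character variety and the behavior at the endpoints $\alpha = 0$ (complete structure) and $\alpha \to \alpha_0$ (Euclidean degeneration).
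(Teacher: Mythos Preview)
Your overall architecture---Hoste--Shanahan presentation, Riley normal form, recursion for $P_{2n}$ via Cayley--Hamilton, then Schl\"afli---matches the paper exactly, and your identification of the recursion mechanism is correct. The substantive divergence is in Step~(iii), which you flag as the main obstacle. The paper does \emph{not} compute $\mathrm{tr}\,\rho_\alpha(\lambda)$ and then reduce modulo $P_{2n}$; instead it reads off the longitude eigenvalue $L=\rho(l)_{11}$ directly using a symmetry trick borrowed from Hoste--Shanahan. Writing the longitude as $l=ww^{*}s^{-4n}$ with $w^{*}$ the reversed word, one observes that the matrix $\widetilde U=\rho(w^{*})^{1/n}$ has entries obtained from those of $U=\rho(w)^{1/n}$ by the substitution $M\mapsto M^{-1}$. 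A one-line identity (their Lemma in the longitude section), $u_{21}L+\tilde u_{21}M^{-4n}=0$, then gives $L=-M^{-4n}\tilde u_{21}/u_{21}$, and since the $(2,1)$-entry depends on $M$ only through the factor $M^{2}+x$ up to symmetric pieces, this collapses immediately to $L=-M^{-4n-2}\dfrac{M^{-2}+x}{M^{2}+x}$ with no recursion and no use of $P_{2n}=0$ at all. So what you anticipate as a hard Chebyshev-type collapse is replaced by a two-line symmetry argument; your route would presumably work but is not the one taken.

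One correction to Step~(iv): the upper limit $\pi$ is \emph{not} justified by ``$\mathrm{Vol}(X_{2n}(\pi))=0$'' and Schl\"afli on $[\alpha,\pi]$---the structure is spherical past $\alpha_0$ and Schl\"afli for hyperbolic volume only applies on $[\alpha,\alpha_0]$. The paper integrates Schl\"afli from $\alpha$ to $\alpha_0$ (where the Euclidean degeneration gives volume zero) and then extends the integral to $\pi$ by showing that the integrand $\log|L|$ vanishes identically on $(\alpha_0,\pi]$: on that range all characters are real, forcing $\mathrm{Im}(x)=0$ and hence $|L|=1$ by the explicit formula for $|L|^2$. This is also what pins down the branch condition $\mathrm{Im}(x)\le 0$ (equivalently $|L|\ge 1$) and explains why $\alpha_0$ appears as a zero of the discriminant of $P_{2n}$ in $x$.
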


From Theorem~\ref{thm:main}, the following corollary can be obtained. The following corollary gives the hyperbolic volume of the $k$-fold cyclic covering over $T_{2n}$, $M_k (X_{2n})$, for $k \geq 3$.

\begin{corollary}
The volume of $M_k (X_{2n})$ is given by the following formula

\begin{align*}
\text{\textnormal{Vol}} \left(M_k (X_{2n}))\right) &=k \int_{\frac{2 \pi}{k}}^{\pi} \log \left|\frac{M^{-2}+x}{ M^{2}+x}\right| \: d\alpha,
\end{align*}

\noindent where 
$x$ with $\text{\textnormal{Im}}(x) \leq 0$ is a zero of the Riley-Mednykh polynomial $P_{2n}=P_{2n}(x,M)$ and $M=e^{i \frac{\alpha}{2}}$. 
\end{corollary}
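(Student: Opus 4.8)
The plan is to derive the corollary from Theorem~\ref{thm:main} by a standard covering-space argument. First I would recall that the $k$-fold cyclic cover $M_k(X_{2n})$ of $S^3$ branched over $T_{2n}$ is exactly the orbifold $X_{2n}(\alpha)$ with cone angle $\alpha = 2\pi/k$, in the sense that $M_k(X_{2n})$ is a $k$-fold orbifold covering of $X_{2n}(2\pi/k)$: away from the singular locus the covering is the honest $k$-fold cyclic cover of the knot exterior, and along the singular locus the cone angle $2\pi/k$ upstairs unwraps to a smooth $2\pi$ locus downstairs. For $k \geq 3$ we have $2\pi/k \leq 2\pi/3 \leq \alpha_0$, so $X_{2n}(2\pi/k)$ is genuinely hyperbolic and $M_k(X_{2n})$ is a genuine hyperbolic manifold.

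The key step is then multiplicativity of hyperbolic volume under finite coverings: if $M \to N$ is a $d$-fold orbifold covering of hyperbolic cone-manifolds, then $\mathrm{Vol}(M) = d \cdot \mathrm{Vol}(N)$, since the volume is computed by integrating the hyperbolic volume form, which pulls back to the volume form, over a fundamental domain that is tiled by $d$ copies of a fundamental domain downstairs. Applying this with $d = k$ and $N = X_{2n}(2\pi/k)$ gives
\begin{align*}
\mathrm{Vol}\!\left(M_k(X_{2n})\right) = k \cdot \mathrm{Vol}\!\left(X_{2n}(2\pi/k)\right).
\end{align*}

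Finally I would substitute the formula of Theorem~\ref{thm:main} for $\mathrm{Vol}(X_{2n}(\alpha))$ evaluated at $\alpha = 2\pi/k$, namely the integral $\int_{2\pi/k}^{\pi} \log\left|\frac{M^{-2}+x}{M^{2}+x}\right| d\alpha$ with $M = e^{i\alpha/2}$ and $x$ the appropriate root of $P_{2n}(x,M)$ with $\mathrm{Im}(x) \leq 0$, and pull the factor of $k$ inside to obtain the stated formula. I do not anticipate a serious obstacle here; the only point requiring a little care is the identification of the $k$-fold cyclic branched cover with the cone-manifold $X_{2n}(2\pi/k)$ and the verification that the hypothesis $k \geq 3$ places the cone angle in the hyperbolic range $(0,\alpha_0)$ so that Theorem~\ref{thm:main} applies. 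This is a well-known fact about cyclic branched covers of two-bridge knots, and the boundary value $\alpha = 2\pi/k$ is admissible in Theorem~\ref{thm:main} since the integrand is integrable up to the endpoint.
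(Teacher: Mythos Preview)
Your approach is correct and is exactly the argument the paper has in mind: the paper states no proof at all for this corollary, simply remarking that it follows from Theorem~\ref{thm:main}, and your covering-space argument is the standard way to unpack that implication. One small wording slip: you write that ``the cone angle $2\pi/k$ upstairs unwraps to a smooth $2\pi$ locus downstairs,'' but the directions are reversed---the cone angle $2\pi/k$ lives in the orbifold $X_{2n}(2\pi/k)$ downstairs, and it is in the $k$-fold branched cover $M_k(X_{2n})$ upstairs that the singularity unwinds to a smooth point.
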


\section{Two bridge knots with  Conway's notation $C(2n, 3)$} \label{sec:C[2n,3]}

\begin{figure} 
\begin{center}
\resizebox{5cm}{!}{\includegraphics{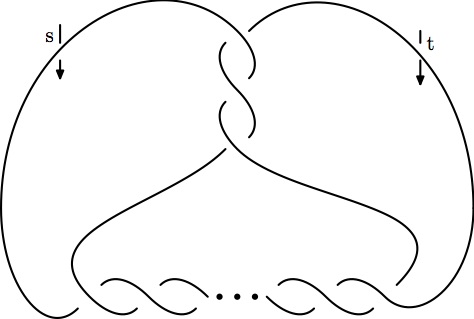}}
\qquad
\resizebox{5cm}{!}{\includegraphics{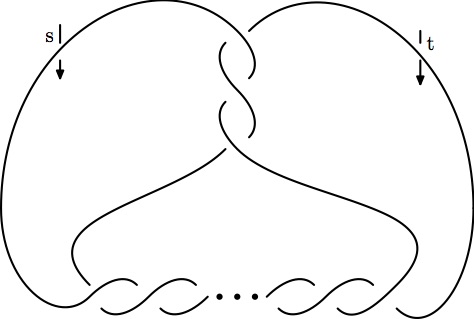}}
\caption{A two bridge knot with Conway's notation $C(2n,3)$  for $n>0$ (left) and for $n<0$ (right)} \label{fig:C[2n,3]}
\end{center}
\end{figure}

\begin{figure}
\begin{center}
\resizebox{5cm}{!}{\includegraphics{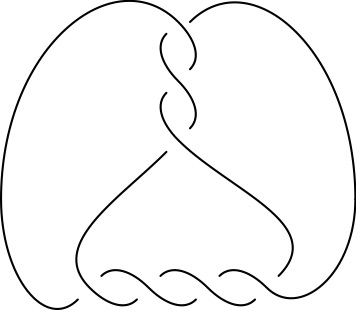}}
\caption{The knot $C(4,3)$ ($7_3$ in the Rolfsen's knot table)}\label{fig:knot}
\end{center}
\end{figure}

A knot is a two bridge knot with Conway's notation $C(2n,3)$ if it has a regular two-dimensional projection of the form in Figure~\ref{fig:C[2n,3]}. For example, Figure~\ref{fig:knot} is knot $C(4,3)$.
It has 3 left-handed vertical crossings and $2 n$ right-handed horizontal crossings. 
Recall that we denote it by $T_{2 n}$. 
In~\cite[Proposition 1]{R1}, the fundamental group of two-bridge knots is presented. We will use the fundamental group of $X_{2n}$ in~\cite{HS}. In~\cite{HS}, the fundamental group of $X_{2n}$ is calculated with 3 right-handed vertical crossings as positive crossings instead of three left-handed vertical crossings. The following proposition is tailored to our purpose. The reduced word relation of the one in the following proposition can also be obtained by reading off the fundamental group from the Schubert normal form of $T_{2n}$ with slope $\frac{4n+1}{6n+1}$ for $n >0$ and by taking the inverse of both sides after reading off the fundamental group from the Schubert normal form of $T_{2n}$ with slope $\frac{4n+1}{6n+1}$ for $n<0$~\cite{S,R1}, which answers the Hoste-Shanahan's question whether their presentation is actually derived from Schubert's canonical 2-bridge diagram or not.

\begin{proposition}\label{thm:fundamentalGroup}
$$\pi_1(X_{2n})=\left\langle s,t \ |\ swt^{-1}w^{-1}=1\right\rangle,$$
where $w=(ts^{-1}tst^{-1}s)^n$.
\end{proposition}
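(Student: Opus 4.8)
The presentation is, up to relabelling, the one of Hoste--Shanahan, so the work splits into (i) matching their presentation with the stated one and (ii) the ``byproduct'', namely rederiving it from Schubert's canonical $2$-bridge diagram. For (i) I would invoke the standard reduction of the Wirtinger presentation of a $2$-bridge knot to a two-generator one-relator presentation, applied to the diagram of Figure~\ref{fig:C[2n,3]}: with meridional generators $s,t$ for the two bridges and the other arc generators eliminated one crossing at a time, the single surviving relator simplifies to $s\,w_n\,t^{-1}w_n^{-1}$ with $w_n=(ts^{-1}tst^{-1}s)^n$, the six extra letters per unit of $n$ recording the effect of two more half-twists in the $2n$-crossing region. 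Since the paper's diagram has left-handed vertical crossings while Hoste--Shanahan's has right-handed ones, their relator differs from this one by the isomorphism induced by the mirror (possibly composed with an interchange of the two meridians); writing that isomorphism out is precisely the ``tailoring'' mentioned in the text.

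For (ii) --- the affirmative answer to the Hoste--Shanahan question --- I would read the presentation straight off Schubert's normal form. For $n>0$, $T_{2n}$ is the two-bridge knot $\mathbf b(6n+1,\,4n+1)$, and Schubert's construction gives $\pi_1(X_{2n})=\langle s,t\mid s\Omega=\Omega t\rangle$ with $\Omega=t^{\epsilon_1}s^{\epsilon_2}t^{\epsilon_3}\cdots s^{\epsilon_{p-1}}$ (the meridians $t$ and $s$ alternating, starting with $t$) and $\epsilon_j=(-1)^{\lfloor jq/p\rfloor}$ for $1\le j\le p-1$, with $(p,q)=(6n+1,4n+1)$. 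The heart of the matter is the sign lemma: for $1\le j\le 6n$, writing $j=6a+r$ with $1\le r\le 6$, one has $\lfloor j(4n+1)/(6n+1)\rfloor=4a+c_r$ with $(c_1,\dots,c_6)=(0,1,2,2,3,4)$, so $\epsilon_j=(-1)^{c_r}$ is $6$-periodic with one period $(+,-,+,+,-,+)$. I would prove it by the substitution $6a(4n+1)=4a(6n+1)+2a$, which gives $\lfloor j(4n+1)/(6n+1)\rfloor=4a+\lfloor(2a+r(4n+1))/(6n+1)\rfloor$, and then by sandwiching $2a+r(4n+1)$ between $c_r(6n+1)$ and $(c_r+1)(6n+1)$ using $0\le 2a\le 2n-2$, with separate attention to the top end ($a=n-1$, $j$ near $6n$, the unique tight case, coming from $r=4$) where the floor is most liable to slip. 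Because $p-1=6n$ is exactly $n$ periods, feeding $(+,-,+,+,-,+)^n$ into $\Omega$ yields $\Omega=(ts^{-1}tst^{-1}s)^n=w$ and hence the relator $s\,w\,t^{-1}w^{-1}$, the same word as in (i). For $n<0$ the identical computation applied to $\mathbf b(6m-1,\,4m-1)$, $m=-n$, gives a Schubert word of length $6m-2$; taking the inverse of both sides of the resulting relation (as the text indicates) and rewriting then yields $w=(ts^{-1}tst^{-1}s)^{-m}=(ts^{-1}tst^{-1}s)^n$, completing the case distinction.

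I expect the real difficulty to be convention bookkeeping, not anything deep. One must fix the orientations of the two meridians, the side on which $w$ conjugates, the handedness of the vertical crossings (hence whether a mirror is involved), and the direction in which the Schubert word is read, so that the relator obtained from Figure~\ref{fig:C[2n,3]} and the one obtained from Schubert's diagram come out as literally the same word in $s$ and $t$ --- not merely conjugate, cyclically rotated, or inverse to it. The sign lemma itself is elementary arithmetic, but it must be proved uniformly in $n$, and the boundary indices $j$ near $6n$, where $\lfloor j(4n+1)/(6n+1)\rfloor$ could in principle jump, have to be checked separately.
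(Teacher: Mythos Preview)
Your proposal is correct and follows exactly the route the paper indicates: the paper does not actually prove this proposition but simply cites Hoste--Shanahan (with the handedness adjustment you call ``tailoring'') and asserts, without any details, that for $n>0$ the same relator is read off Schubert's normal form for slope $(4n+1)/(6n+1)$, and for $n<0$ after inverting both sides. Your explicit $6$-periodicity computation of the signs $\epsilon_j=(-1)^{\lfloor j(4n+1)/(6n+1)\rfloor}$, yielding the block $(+,-,+,+,-,+)$ and hence $\Omega=(ts^{-1}tst^{-1}s)^n$, is precisely the missing verification behind the paper's one-line claim; so you are supplying substantially more detail than the paper does rather than taking a different approach.
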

\section{$\left(PSL(2,C),\mathbb{H}^3 \right) \text{ structure of } X_{2n} (\alpha)$} \label{sec:poly}
Let $R=\text{Hom}(\pi_1 (X_{2n}), \text{SL}(2, \C))$. 
Given  a set of generators, $s,t$, of the fundamental group for 
$\pi_1 (X_{2n})$, we define
 a set $R\left(\pi_1 (X_{2n})\right) \subset \text{SL}(2, \C)^2 \subset \C^{8}$ to be the set of
 all points $(\rho(s),\rho(t))$, where $\rho$ is a
 representaion of $\pi_1 (X_{2n})$ into $\text{SL}(2, \C)$. Since the defining relation of 
 $\pi_1 (X_{2n})$ gives the defining equation of $R\left(\pi_1 (X_{2n})\right)$~\cite{R3}, $R\left(\pi_1 (X_{2n})\right)$ is an affine algebraic set in $\C^{8}$. 
$R\left(\pi_1 (X_{2n})\right)$ is well-defined up to isomorphisms which arise from changing the set of generators. We say elements in $R$ which differ by conjugations in $\text{SL}(2, \C)$ are \emph{equivalent}. 
A point on the variety gives the $\left(PSL(2,C),\mathbb{H}^3 \right) \text{ structure of } X_{2n} (\alpha)$.

Let
\begin{center}
$$\begin{array}{ccccc}
\rho(s)=\left[\begin{array}{cc}
                       M &       1 \\
                        0      & M^{-1}  
                     \end{array} \right]                          
\text{,} \ \ \
\rho(t)=\left[\begin{array}{cc}
                   M &  0      \\
                   2-M^2-M^{-2}-x      & M^{-1} 
                 \end{array}  \right].
\end{array}$$
\end{center}

Then $\rho$ becomes a representation if and only if $M$ and $x$ satisfies a polynomial equation~\cite{R3,MR2}. We call the defining polynomial 
of the algebraic set $\{(M,x)\}$ as the \emph{Riley-Mednykh polynomial}.

\subsection{The Riley-Mednykh polynomial}
Since we are interested in the excellent component (the geometric component) of $R\left(\pi_1(X_{2n})\right)$, 
in this subsection we set 
$M=e^{\frac{i \alpha}{2}}$.
Given the fundamental group of $X_{2n}$,
$$\pi_1(X_{2n})=\left \langle s,t \ |\  swt^{-1}w^{-1}=1 \right \rangle,$$
where $w=(ts^{-1}tst^{-1}s)^n$, let $S=\rho(s),\  T=\rho(t)$ and $W=\rho(w)$. Then the trace of $S$ and the trace of $T$ are both 
$2 \cos \frac{\alpha}{2}$. 

\begin{lemma}\label{lem:swc}
For $c \in \text{\textnormal{SL}}(2, \C)$ which satisfies $cS=T^{-1}c$ and $c^2=-I$,
$$SWT^{-1}W^{-1}=-(SWc)^2 $$
\end{lemma}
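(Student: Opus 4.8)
The plan is to manipulate the right-hand side $-(SWc)^2$ purely algebraically, using only the two hypotheses on $c$, namely $cS = T^{-1}c$ (equivalently $c^{-1}T^{-1}c = S$, or $cSc^{-1} = T^{-1}$ after noting $c^{-1} = -c$) and $c^2 = -I$. First I would expand $-(SWc)^2 = -SWc\,SWc = -SW(cS)Wc$, and then replace $cS$ by $T^{-1}c$ to obtain $-SWT^{-1}cWc$. At this point the goal $SWT^{-1}W^{-1}$ suggests I need to show $-cWc = W^{-1}$, i.e. that $c$ conjugates $W$ to $-c^{-1}Wc$... more precisely that $cWc^{-1} = W^{-1}$ up to the sign coming from $c^2 = -I$: indeed $-cWc = -cWc = c W (-c) = cW c^{-1}\cdot(c)(-c)\cdot\text{(adjust)}$; cleanly, since $c^{-1} = -c$, we have $-cWc = cWc^{-1}$, so the identity to prove reduces to the single claim $cWc^{-1} = W^{-1}$.

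So the crux is the symmetry statement $cWc^{-1} = W^{-1}$, where $W = (TS^{-1}TST^{-1}S)^n$. Since conjugation by $c$ is an anti-automorphism-free group homomorphism, it suffices to handle $n=1$, i.e. to show $c(TS^{-1}TST^{-1}S)c^{-1} = (TS^{-1}TST^{-1}S)^{-1} = S^{-1}TS^{-1}T^{-1}ST^{-1}$. The key mechanism is that conjugation by $c$ swaps $S \leftrightarrow T^{-1}$: from $cSc^{-1} = T^{-1}$ one also gets $cTc^{-1} = S^{-1}$ (invert and use $c^{-1}S^{-1}c = \ldots$, or note $cS^{-1}c^{-1} = T$ so $cTc^{-1} = S^{-1}$). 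Applying this substitution termwise to $TS^{-1}TST^{-1}S$ gives $S^{-1}\,T\,S^{-1}\,T^{-1}\,S\,T^{-1}$, which is exactly $(TS^{-1}TST^{-1}S)^{-1}$ — so the word $w$ has the required palindromic symmetry, and this is really a statement about the cyclically symmetric form of the two-bridge relator. I would verify the reversal carefully: reading $TS^{-1}TST^{-1}S$ backwards and inverting each letter yields $S^{-1}TS^{-1}T^{-1}S T^{-1}$, matching the image under $c$-conjugation.

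Assembling the pieces: $-(SWc)^2 = -SW(cS)Wc = -SWT^{-1}cWc = SWT^{-1}(cWc^{-1})\cdot(-c\cdot c^{-1}\cdot c)\ldots$; more directly, $-SWT^{-1}cWc = -SWT^{-1}(cWc)$ and since $-cWc = -cWc = cWc^{-1} = W^{-1}$ we get $-SWT^{-1}cWc = SWT^{-1}W^{-1}$, as claimed. I expect the main obstacle to be purely bookkeeping: getting the signs right through the identities $c^2 = -I$, $c^{-1} = -c$, and keeping the order of letters straight when verifying $cwc^{-1} = w^{-1}$. One should also confirm that such a $c$ exists and that the two stated conditions $cS = T^{-1}c$, $c^2 = -I$ are consistent with the explicit matrices for $\rho(s), \rho(t)$ — this amounts to a short linear-algebra check that a trace-zero involution intertwining $S$ and $T^{-1}$ exists, which is automatic because $S$ and $T^{-1}$ have the same trace $2\cos\frac{\alpha}{2}$ and $\rho$ is irreducible on the geometric component.
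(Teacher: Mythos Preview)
Your proof is correct and follows essentially the same route as the paper: expand $(SWc)^2$, use $cS=T^{-1}c$ to pull $c$ past $S$, then verify that $c$-conjugation sends the word $TS^{-1}TST^{-1}S$ to its inverse $S^{-1}TS^{-1}T^{-1}ST^{-1}$ (hence $W$ to $W^{-1}$), and finish with $c^2=-I$. The paper's writeup is terser---it simply displays the chain $(SWc)^2 = SWT^{-1}c(TS^{-1}TST^{-1}S)^nc = SWT^{-1}(S^{-1}TS^{-1}T^{-1}ST^{-1})^n c^2 = -SWT^{-1}W^{-1}$---but the underlying mechanism is identical to yours; your discussion of the existence of $c$ is extraneous here since the lemma assumes it.
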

\begin{proof}
 \begin{equation*}
\begin{split}
 (SWc)^2 & =SWcSWc=SWT^{-1}c(TS^{-1}TST^{-1}S)^nc \\
              & =SWT^{-1}(S^{-1}TS^{-1}T^{-1}ST^{-1})^nc^2=-SWT^{-1}W^{-1}.
 \end{split}
\end{equation*}   
\end{proof}

From the structure of the algebraic set of $R\left(\pi_1(X_{2n})\right)$ with coordinates $\rho(s)$ and $\rho(t)$ we have the defining equation of 
$R\left(\pi_1(X_{2n})\right)$. By plugging in $e^{\frac{i \alpha}{2}}$ into $M$ of that equation, we have the following theorem.

\begin{theorem} \label{thm:RMpolynomial}
$\rho$ is a representation of $\pi_1(X_{2n})$ if and only if $x$ is a root of the following Riley-Mednykh polynomial $P_{2n}=P_{2n}(x,M)$ which is given recursively by 

\medskip
\begin{equation*}
P_{2n} = \begin{cases}
 Q P_{2(n-1)} -M^8 P_{2(n-2)} \ \text{if $n>1$}, \\
 Q P_{2(n+1)}-M^8 P_{2(n+2)} \ \text{if $n<-1$},
\end{cases}
\end{equation*}
\medskip

\noindent with initial conditions
\begin{equation*}
\begin{split}
P_{-2}  & =M^2 x^2+\left(M^4- M^2+1\right) x+M^2,\\
P_{0}  & =M^{-2} \ \text{\textnormal{for}} \ n < 0 \qquad \text{\textnormal{and}} \qquad P_{0} (x,M)  =1 \ \text{\textnormal{for}} \ n > 0,\\    
P_{2}  & =-M^4 x^3+\left(-2M^6+M^4-2 M^2\right) x^2+\left(-M^8+M^6-2 M^4+ M^2-1\right) x+M^4, \\
\end{split}
\end{equation*}
\noindent and
$$Q=-M^4 x^3+\left(-2M^6+2 M^4-2M^2\right) x^2+\left(-M^8+2M^6-3M^4+2M^2-1\right) x+2M^4.$$
\end{theorem}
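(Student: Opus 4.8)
The plan is to realize the defining equation of $R(\pi_1(X_{2n}))$ via Lemma~\ref{lem:swc} and then extract the recursion from the powers of a single matrix. First I would fix the conjugating element $c \in \mathrm{SL}(2,\mathbb{C})$ satisfying $cS = T^{-1}c$ and $c^2 = -I$; since $S$ and $T$ are the explicit upper/lower triangular matrices displayed above, $c$ is determined up to sign by linear algebra in terms of $M$ and $x$. By Lemma~\ref{lem:swc}, the relator $SWT^{-1}W^{-1} = I$ is equivalent to $(SWc)^2 = -I$, i.e. to $\mathrm{tr}(SWc) = 0$ (the off-diagonal/trace condition for an $\mathrm{SL}(2,\mathbb{C})$ matrix to square to $-I$). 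So the Riley-Mednykh polynomial is, up to a unit, $\mathrm{tr}(SWc)$ as a function of $x$ and $M$, where $W = \rho(w) = (TS^{-1}TST^{-1}S)^n$.

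Next I would compute the single matrix $V := TS^{-1}TST^{-1}S$ explicitly (a $2\times 2$ matrix with entries polynomial in $M^{\pm1}$ and $x$), so that $W = V^n$. The key device is the Cayley-Hamilton recursion: $V^n$ satisfies $V^{n} = (\mathrm{tr}\,V)\,V^{n-1} - (\det V)\,V^{n-2}$, and since $V$ is a product of $\mathrm{SL}(2,\mathbb{C})$ matrices, $\det V = 1$; thus each entry of $V^n$, and hence $\mathrm{tr}(SV^nc)$, satisfies a linear recursion in $n$ with characteristic polynomial $\lambda^2 - (\mathrm{tr}\,V)\lambda + 1$. Multiplying through by a suitable power of $M$ to clear denominators and match the stated normalization, I would show $P_{2n}$ satisfies $P_{2n} = Q\,P_{2(n-1)} - M^8 P_{2(n-2)}$ for $n > 1$, with $Q$ being $M^4\,\mathrm{tr}(V)$ (one checks directly that the stated $Q$ equals $M^4(\mathrm{tr}\,V)$ after substituting the entries of $V$); the $M^8$ rather than $1$ on the second term and the shift in indexing by $2$ just reflect the chosen normalization $P_{2n} = (\text{power of }M)\cdot\mathrm{tr}(SV^nc)$. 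The case $n < -1$ is handled symmetrically using $V^{-1}$ and the identity $\mathrm{tr}(V^{-1}) = \mathrm{tr}(V)$, which produces the mirrored recursion.

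The remaining task is to verify the three initial conditions $P_{-2}, P_0, P_2$ by direct computation: evaluate $\mathrm{tr}(SV^nc)$ (suitably normalized) at $n = -1, 0, 1$. The case $n=0$ gives $W = I$, so the relator becomes $ST^{-1} = $ conjugate, i.e. $(Sc)^2 = -I$; this should collapse to a constant in $x$ (the stated $M^{-2}$ or $1$), and the discrepancy between the $n>0$ and $n<0$ normalizations is exactly the $M^2$ factor one expects from the two sides of the recursion meeting at $n=0$. For $n = \pm 1$ one simply multiplies out $SVc$ (resp. $SV^{-1}c$) and reads off the trace as a cubic (resp. quadratic) in $x$; matching coefficients with the displayed $P_2$ and $P_{-2}$ is a finite check.

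The main obstacle I anticipate is purely computational bookkeeping: getting the conjugator $c$ right (including the correct sign and normalization so that $\mathrm{tr}(SWc)$ comes out as a polynomial, not a Laurent polynomial, in $M$), and then pinning down the exact power of $M$ in the normalization $P_{2n} = M^{?}\,\mathrm{tr}(SV^nc)$ so that the recursion coefficients are precisely $Q$ and $-M^8$ and the initial terms match verbatim. There is also a subtlety in that $\mathrm{tr}(SWc)$ a priori is only defined up to the sign ambiguity in $c$, so I would need to argue that the resulting zero set $\{P_{2n} = 0\}$ is independent of that choice (it is, since replacing $c$ by $-c$ negates the trace). Once the normalization is fixed, everything else is a routine induction via Cayley-Hamilton, so I expect the write-up to consist of one explicit matrix computation for $V$, one for $c$, the two base-case verifications, and a one-line recursion argument.
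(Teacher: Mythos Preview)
Your approach is essentially the paper's: reduce the relator to $\mathrm{tr}(SWc)=0$ via Lemma~\ref{lem:swc}, set $U=TS^{-1}TST^{-1}S$ so that $W=U^n$, and run Cayley--Hamilton on $U$ (together with the intertwining $cU=U^{-1}c$, which follows from $cS=T^{-1}c$) to produce the three-term recursion. The identification $Q=M^4\,\mathrm{tr}(U)$ and the direct base-case checks are exactly what the paper does.

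There is, however, one genuine gap in your normalization step. The conjugator $c$ cannot be chosen to have polynomial entries: solving $cS=T^{-1}c$, $c^2=-I$ forces
\[
c=\begin{pmatrix} 0 & -\dfrac{M}{\sqrt{-1+2M^2-M^4-M^2x}} \\[2ex] \dfrac{\sqrt{-1+2M^2-M^4-M^2x}}{M} & 0 \end{pmatrix},
\]
so $\mathrm{tr}(SU^nc)$ is not a polynomial (or even a Laurent polynomial) in $M$ and $x$; it always carries the irrational factor $\mathrm{tr}(Sc)=\sqrt{-1+2M^2-M^4-M^2x}/M$. Multiplying by a power of $M$ alone will not fix this. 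The paper's missing ingredient in your outline is that $\mathrm{tr}(Sc)$ is a \emph{common factor} of every $\mathrm{tr}(SU^nc)$ (immediate from the recursion once verified for $n=0,\pm 1$), and the correct definition is $P_{2n}=M^{4n}\,\mathrm{tr}(SU^nc)/\mathrm{tr}(Sc)$ for $n>0$ and $P_{2n}=M^{4n-2}\,\mathrm{tr}(SU^nc)/\mathrm{tr}(Sc)$ for $n<0$. In particular, your expectation that the $n=0$ case ``should collapse to a constant in $x$'' is only correct \emph{after} this division, since $\mathrm{tr}(Sc)$ itself depends on $x$. Once you insert this quotient, the rest of your plan goes through verbatim.
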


\begin{proof}
Note that $SWT^{-1}W^{-1}=I$, which gives the defining equations of 
$R\left(\pi_1(X_{2n})\right)$, is equivalent to $(SWc)^2=-I$ in $\text{SL}(2,\C)$ 
by Lemma~\ref{lem:swc} and  
$(SWc)^2=-I$ in $\text{SL}(2,\C)$ is equivalent to $\text{\textnormal{tr}}(SWc)=0$.

We can find  two $c$'s in $\text{\textnormal{SL}}(2, \C)$ which satisfies $cS=T^{-1}c$ and $c^2=-I$ by direct computations. The existence and the uniqueness of the isometry (the involution) which is represented by $c$ are shown in~\cite[p. 46]{F}. Since two $c$'s give the same element in $\text{\textnormal{PSL}}(2, \C)$, we use one of them.
Hence, we may assume
 \begin{center}
$$\begin{array}{cc}
c=\left[\begin{array}{cc}
        0 & -\frac{M }{\sqrt{-1+2 M^2-M^4-M^2 x}}     \\
        \frac{\sqrt{-1+2 M^2-M^4-M^2 x}}{M} & 0
       \end{array}  \right],
\end{array}$$
\end{center}

 \begin{center}
$$\begin{array}{ccccc}
S=\left[\begin{array}{cc}
     e^ {\frac{ i \alpha}{2} }& 1       \\
     0 &  e^{-\frac{i  \alpha}{2}} 
                     \end{array} \right],                          
\ \ \
T=\left[\begin{array}{cc}
     e^ { \frac{i \alpha}{2} }& 0       \\
     2-e^{i \alpha}-e^{-i \alpha}-x &  e^{ -\frac{i \alpha}{2}} 
                     \end{array} \right],            
\end{array}$$
\end{center}

and let $U=TS^{-1}TST^{-1}S$. 

Recall that $P_{2n}$ is the defining polynomial of the algebraic set $\{(M,x)\}$ and the defining polynomial of $R\left(\pi_1(X_{2n})\right)$ corresponding to our choice of $\rho(s)$ and $\rho(t)$. We will show that 
$P_{2n}$ is equal to $\text{\textnormal{tr}}(SWc) / \text{\textnormal{tr}}(Sc)$ times $M^{4n}$ for $n>0$ and $M^{4n-2}$ for $n<0$. One can easily see 
$\text{\textnormal{tr}}(SUc)=P_2 \text{\textnormal{tr}}(Sc)/M^4$, $\text{\textnormal{tr}}(SWc)=P_{-2} \text{\textnormal{tr}}(Sc)/M^2$ and $\text{\textnormal{tr}}(U)=Q/M^4=\text{\textnormal{tr}}(U^{-1})$. We set $P_0$ as the statement of the theorem.
Now, we only need the following recurrence relations.

\begin{align*}
\text{\textnormal{tr}}(SWc) & =\text{\textnormal{tr}}(SU^nc)=\text{\textnormal{tr}}(SU^{n-1}cU^{-1})=\text{\textnormal{tr}}(SU^{n-1}c)\text{\textnormal{tr}}(U^{-1})-\text{\textnormal{tr}}(SU^{n-1}cU) \\
            & =\text{\textnormal{tr}}(SU^{n-1}c)\text{\textnormal{tr}}(U^{-1})-\text{\textnormal{tr}}(SU^{n-2}c)=0 \ \text{if $n>1$}
\end{align*}
or
\begin{equation*}
\begin{split}
\text{\textnormal{tr}}(SWc) & =\text{\textnormal{tr}}(SU^nc)=\text{\textnormal{tr}}(SU^{n+1}cU)=\text{\textnormal{tr}}(SU^{n+1}c)\text{\textnormal{tr}}(U)-\text{\textnormal{tr}}(SU^{n+1}cU^{-1}) \\
            & =\text{\textnormal{tr}}(SU^{n+1}c)\text{\textnormal{tr}}(U)-\text{\textnormal{tr}}(SU^{n+2}c)=0 \ \text{if $n<-1$},
\end{split}
\end{equation*}
where the third equality comes from the Cayley-Hamilton theorem. Since
$\text{\textnormal{tr}}(Sc)$, $\text{\textnormal{tr}}(SUc)$, and $\text{\textnormal{tr}}(SU^{-1}c)$ have 
$\text{\textnormal{tr}}(Sc)=\frac{\sqrt{-1+2 M^2-M^4-M^2 x}}{M}$
 as a common factor, all of 
$\text{\textnormal{tr}}(SWc)$'s have $\text{\textnormal{tr}}(Sc)$ as a common factor. We left the common factor out of $\text{\textnormal{tr}}(SWc)$, multiplied it by a power of $M^{4n}$ if $n>0$ and $M^{4n-2}$ for $n<0$ to clear the fractions and denote it by $P_{2n}$, the Riley-Mednykh polynomial.
\end{proof}

\section{Longitude}
\label{sec:longitude}
   Let $l = ww^{*}s^{-4n}$, where $w^{*}$ is the word obtained by reversing $w$. Let $L=\rho(l)_{11}$. Then $l$ is the longitude which is null-homologus in $X_{2n}$. Recall $\rho(w)=U^n$. We can write $\rho(w^{*})=\widetilde{U}^n$. It is easy to see that $U$ and $\widetilde{U}$ can be written as
   
   $$U=\begin{pmatrix}
u_{11} & u_{12} \\
u_{21} & u_{22}
\end{pmatrix}$$
and 
$$\widetilde{U}=
\begin{pmatrix}
\tilde{u}_{22} & \tilde{u}_{12} \\
\tilde{u}_{21}& \tilde{u}_{11}
\end{pmatrix}
$$
   
where $\tilde{u}_{ij}$ is obtained by $u_{ij}$ by replacing $M$ with $M^{-1}$. Similar computation was introduced in~\cite{HS}.

\begin{definition} \label{def:longitude}
 The \emph{complex length} of the longitude $l$ is the complex number 
 $\gamma_{\alpha}$ modulo $4 \pi \Z$ satisfying 
\begin{align*}
 \text{\textnormal{tr}}(\rho(l))=2 \cosh \frac{\gamma_{\alpha}}{2}.
\end{align*}
 Note that 
 $l_{\alpha}=|Re(\gamma_{\alpha})|$ is the real length of the longitude of the cone-manifold $X_{2n}(\alpha)$.
\end{definition}

The following lemma was introduced in~\cite{HS} with slightly different coordinates. 

\begin{lemma}~\cite{HS}
\label{lem:lemma}
$u_{21} L+\tilde{u}_{21} M^{-4n}=0$.
\end{lemma}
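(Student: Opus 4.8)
The plan is to compute $\text{tr}(\rho(l))$ directly from the matrix entries, using the block structure of $U=\rho(w)$, $\widetilde U=\rho(w^*)$, and the triangular form of $\rho(s)$, and to show that the vanishing of a single entry of $\rho(l)$ forces the stated identity $u_{21}L+\tilde u_{21}M^{-4n}=0$. Since $l=ww^*s^{-4n}$ and $\rho(s)$ is upper-triangular with diagonal $(M,M^{-1})$, the matrix $\rho(s^{-4n})$ is upper-triangular with diagonal $(M^{-4n},M^{4n})$; write $\rho(s^{-4n})=\begin{pmatrix} M^{-4n} & \beta \\ 0 & M^{4n}\end{pmatrix}$ for the appropriate $\beta$.

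The key observation I would use is that $l$ is the longitude, hence commutes with the meridian $\rho(s)$ up to the peripheral subgroup being abelian: more precisely $\rho(l)$ and $\rho(s)$ share a common eigenvector (the parabolic/loxodromic fixed point of the peripheral subgroup), so $\rho(l)$ is upper-triangular in the chosen basis where $\rho(s)$ is upper-triangular — in particular $\rho(l)_{21}=0$. This is the structural input; it is exactly the statement that $\ell$ lies in the peripheral subgroup generated together with $s$. I would then expand $\rho(l)_{21}=\bigl(U\widetilde U\rho(s^{-4n})\bigr)_{21}$. Because $\rho(s^{-4n})$ is upper-triangular, the $(2,1)$-entry of $U\widetilde U\rho(s^{-4n})$ equals $(U\widetilde U)_{21}\cdot M^{-4n}$. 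So the claim reduces to showing $(U\widetilde U)_{21}=u_{21}L$, or rather that the full relation $\rho(l)_{21}=0$ unwinds to $u_{21}L+\tilde u_{21}M^{-4n}=0$ once one also uses $L=\rho(l)_{11}$ and the relation between the $(1,1)$ and $(2,1)$ entries coming from $\det\rho(l)=1$ together with $\rho(l)_{21}=0$.

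More concretely: from the block forms, $\bigl(U\widetilde U\bigr)_{21}=u_{21}\tilde u_{22}+u_{22}\tilde u_{21}=u_{21}\tilde u_{22}+u_{22}\tilde u_{21}$, and $\bigl(U\widetilde U\bigr)_{11}=u_{11}\tilde u_{22}+u_{12}\tilde u_{21}$. Multiplying on the right by $\rho(s^{-4n})$ and reading off the $(1,1)$ entry gives $L=\rho(l)_{11}=M^{-4n}\bigl(U\widetilde U\bigr)_{11}=M^{-4n}(u_{11}\tilde u_{22}+u_{12}\tilde u_{21})$. The condition $\rho(l)_{21}=0$ reads $M^{-4n}\bigl(U\widetilde U\bigr)_{21}=0$, i.e. $u_{21}\tilde u_{22}+u_{22}\tilde u_{21}=0$. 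Then I would combine these: solve the second for $\tilde u_{22}=-u_{22}\tilde u_{21}/u_{21}$, substitute into the expression for $L$, and simplify using $\det U=u_{11}u_{22}-u_{12}u_{21}=1$ (and the analogous $\det\widetilde U=1$) to obtain $u_{21}L=-M^{-4n}\tilde u_{21}$, which is the assertion. The main obstacle — and the only place genuine input beyond bookkeeping enters — is justifying $\rho(l)_{21}=0$, that is, that the longitude fixes the same point at infinity as the meridian $s$; I would cite the standard fact that the peripheral subgroup of a knot group is abelian (and that $s,l$ generate it), so any representation sends it to a group of matrices simultaneously triangularizable in the eigenbasis of $\rho(s)$, which in our coordinates is the standard basis since $\rho(s)$ is already upper-triangular with distinct eigenvalues $M^{\pm1}$ for $M\neq\pm1$ (the parabolic case $M=\pm1$ being handled by continuity). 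The remaining algebra is the routine $2\times2$ manipulation sketched above.
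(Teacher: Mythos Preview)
The paper does not actually supply a proof of this lemma; it simply attributes the identity to Hoste--Shanahan~\cite{HS} and then uses it in the one-line proof of Theorem~\ref{thm:longitude}. So there is nothing in the paper to compare your argument against, and your write-up would in fact fill a genuine gap.

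Your argument is sound, but there is a notational slip that leaves a small hole. You set $U=\rho(w)$, whereas in the paper $U=TS^{-1}TST^{-1}S$ and $\rho(w)=U^{n}$ (and likewise $\rho(w^{*})=\widetilde{U}^{\,n}$). With your reading, the computation you carry out --- using $\rho(l)_{21}=0$ (from commutativity of the peripheral subgroup, exactly as you justify it), the upper-triangularity of $\rho(s^{-4n})$, and $\det=1$ --- yields
\[
(U^{n})_{21}\,L+(\widetilde{U}^{\,n})_{21}\,M^{-4n}=0,
\]
not the stated relation for the entries $u_{21},\tilde u_{21}$ of the single blocks. To close the gap you need one more observation: by Cayley--Hamilton, $U^{n}=a_{n}U+b_{n}I$ with $a_{n},b_{n}$ polynomials in $\text{tr}(U)$, so $(U^{n})_{21}=a_{n}\,u_{21}$; and since $\text{tr}(U)=Q/M^{4}$ is invariant under $M\mapsto M^{-1}$ (hence $\text{tr}(U)=\text{tr}(\widetilde{U})$), the same $a_{n}$ appears in $(\widetilde{U}^{\,n})_{21}=a_{n}\,\tilde u_{21}$. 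Dividing through by $a_{n}$ gives the lemma as stated. (You also implicitly use that $\rho(w^{*})$ has the same ``anti-transpose under $M\mapsto M^{-1}$'' shape relative to $\rho(w)$ that $\widetilde{U}$ has relative to $U$; this is true and follows because anti-transposition is an antihomomorphism, but it deserves a sentence.) With these two remarks added, your proof is complete and is exactly the natural argument one would extract from~\cite{HS}.
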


\begin{theorem}
\label{thm:longitude}
\begin{align*}
L=-M^{-4n-2}\frac{M^{-2}+x}{ M^{2}+x}.
\end{align*}
\end{theorem}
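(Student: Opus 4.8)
The plan is to extract $L=\rho(l)_{11}$ from the relation in Lemma~\ref{lem:lemma}, $u_{21}L+\tilde{u}_{21}M^{-4n}=0$, which immediately gives $L=-M^{-4n}\,\tilde{u}_{21}/u_{21}$. So the entire theorem reduces to computing the $(2,1)$-entries of $U^n$ and $\widetilde{U}^n$ and showing that $\tilde{u}_{21}/u_{21}=M^{-2}(M^{-2}+x)/(M^2+x)$. First I would recall that, by the remark preceding the lemma, $\widetilde{U}$ is obtained from $U$ by swapping the diagonal entries and replacing $M$ by $M^{-1}$ throughout; since $\widetilde U$ represents the reversed word $w^\ast$ of the \emph{same} power $n$, one has $\rho(w^\ast)=\widetilde U^{\,n}$ where $\widetilde U$ is literally the matrix displayed in Section~\ref{sec:longitude}. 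The key structural fact to exploit is that, for a $2\times2$ matrix, the $(2,1)$-entry of its $n$-th power is $u_{21}$ times the Chebyshev-like polynomial $S_{n-1}(\mathrm{tr}\,U)$ in the trace alone (where $S_k$ satisfies $S_k=\mathrm{tr}(U)S_{k-1}-S_{k-2}$, $S_0=1$, $S_{-1}=0$); this follows from Cayley--Hamilton exactly as in the proof of Theorem~\ref{thm:RMpolynomial}. Hence $(U^n)_{21}=u_{21}\,S_{n-1}(\mathrm{tr}\,U)$ and $(\widetilde U^{\,n})_{21}=\tilde u_{21}\,S_{n-1}(\mathrm{tr}\,\widetilde U)$.

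The crucial observation is then that $\mathrm{tr}\,\widetilde U=\mathrm{tr}\,U$: indeed $\mathrm{tr}\,U=Q/M^4$ from Theorem~\ref{thm:RMpolynomial}, and $Q/M^4$ is visibly invariant under $M\mapsto M^{-1}$ (one checks $Q(x,M^{-1})=M^{-8}Q(x,M)$ directly from the displayed formula for $Q$, so $Q(x,M^{-1})/M^{-4}=Q(x,M)/M^4$; swapping the diagonal entries of $U$ does not change its trace). Therefore the Chebyshev factors $S_{n-1}$ are equal, they cancel, and
\begin{align*}
L=-M^{-4n}\frac{\tilde u_{21}\,S_{n-1}(\mathrm{tr}\,\widetilde U)}{u_{21}\,S_{n-1}(\mathrm{tr}\,U)}=-M^{-4n}\frac{\tilde u_{21}}{u_{21}}.
\end{align*}
It remains only to compute the single entries $u_{21}=(TS^{-1}TST^{-1}S)_{21}$ and $\tilde u_{21}=u_{21}\big|_{M\to M^{-1}}$ from the explicit matrices $S$, $T$ of Section~\ref{sec:poly}, and to verify that their ratio equals $M^{-2}(M^{-2}+x)/(M^2+x)$, i.e. that $u_{21}$ has the form $(\text{unit})\cdot M^{2}(M^{2}+x)$ up to a factor symmetric in $M\leftrightarrow M^{-1}$. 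This is a direct $2\times2$ matrix multiplication; the expected outcome is $u_{21}=M^{2}(M^{2}+x)\cdot f(x,M)$ with $f(x,M^{-1})=f(x,M)$, whence $\tilde u_{21}/u_{21}=M^{-2}(M^{-2}+x)/(M^{2}+x)$ and the theorem follows after substituting into the boxed expression for $L$.

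I expect the main obstacle to be purely computational rather than conceptual: carrying out the product $U=TS^{-1}TST^{-1}S$ by hand and correctly isolating the $(2,1)$-entry, then factoring it cleanly so that the $M\leftrightarrow M^{-1}$-symmetric part is transparent. One must be careful with signs and with the normalization $\det S=\det T=1$ (so that $S^{-1}$, $T^{-1}$ have the expected form), and also bookkeeping the power $M^{-4n}$ against $M^{-4n-2}$ — the extra $M^{-2}$ in the statement is exactly the $M^{-2}$ coming from $\tilde u_{21}/u_{21}$ once the $(M^2+x)$-type factors are pulled out. A minor subtlety worth a sentence in the write-up is the degenerate cases $n=\pm1$ (where $S_{n-1}$ is constant) and the sign conventions in $\widetilde U$; both are handled by the same formula since $S_0=1$.
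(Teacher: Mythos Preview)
Your proposal is correct and follows essentially the same route as the paper, whose proof is the single line ``by directly computing $u_{21}L+\tilde u_{21}M^{-4n}=0$''. Two small remarks: in the paper $u_{21}$ already denotes the $(2,1)$-entry of $U=TS^{-1}TST^{-1}S$ itself (not of $U^n$), so your Chebyshev reduction, while correct and harmless, is unnecessary; and when you carry out the multiplication you will find $u_{21}=t(1-t)\,M(M^2+x)$ with $t=2-M^2-M^{-2}-x$ (so the symmetric factor is $t(1-t)$ and the asymmetric one is $M(M^2+x)$, not $M^2(M^2+x)$), which still gives $\tilde u_{21}/u_{21}=M^{-2}(M^{-2}+x)/(M^2+x)$ exactly as you need.
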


\begin{proof}
By directly computing $u_{21} L+\tilde{u}_{21} M^{-4n}=0$ in Lemma~\ref{lem:lemma}, the theorem follows.
\end{proof}


\section{Proof of Theorem~\ref{thm:main}} \label{sec:proof}

For $n \geq 1$ and $M=e^{i \frac{\alpha}{2}}$, $P_{2n}(x,M)$ have $3n$ component zeros, and for  $n < -1$, $-(3n+1)$ component zeros. For each $n$, there exists an angle $\alpha_0 \in [\frac{2\pi}{3},\pi)$ such that $T_{2n}(\alpha)$ is hyperbolic for $\alpha \in (0, \alpha_0)$, Euclidean for $\alpha=\alpha_0$, and spherical for $\alpha \in (\alpha_0, \pi]$ \cite{P2,HLM1,K1,PW}. 
From the following Equality~(\ref{equ:absL}), when $|L|=1$, which happens when $\alpha=\alpha_0$, $\text{\textnormal{Im}}(x)=0$. Hence, when 
$\alpha$ increases from $0$ to $\alpha_0$, two complex numbers $x$ and $\overline{x}$ approach a same real number. In other words, 
$P_{2n}(x,e^{\frac{i \alpha_0}{2}})$ has a multiple root.
Denote by $D(X_{2n}(\alpha))$  the discriminant of
$P_{2n}(x,M)$ over $x$. Then $\alpha_0$ will be one of the zeros of $D(X_{2n}(\alpha))$.

From Theorem~\ref{thm:longitude}, we have the following equality,
\begin{equation}\label{equ:absL}
\begin{split}
|L|^2 &= \left|\frac{M^{-2}+x}{ M^2+x}\right| 
= \frac{(\cos{\alpha}+{\rm Re}(x))^2 + ({\rm Im}(x)-\sin{\alpha})^2}{(\cos{\alpha}+{\rm Re}(x))^2 + ({\rm Im}(x)+ \sin{\alpha})^2}.
\end{split}
\end{equation}

For the volume, we choose $L$ with $|L|\geq1$ and hence we have $\text{\textnormal{Im}}(x) \leq 0$ by Equality~(\ref{equ:absL}). 
 Using the Schl\"{a}fli formula, we calculate the volume of 
$\rho(X_{2n})=\rho(X_{2n}(0))$ for each component with $|L|\geq1$ and having one of the zeros $\alpha_0$ of $D(X_{2n}(\alpha))$ with $\alpha_0 \in [\frac{2\pi}{3},\pi)$ on it. The component which gives the maximal volume is the excellent component~\cite{D1,FK1}. On the geometric component we have
 the volume of a hyperbolic cone-manifold 
$X_{2n}(\alpha)$ for $0 \leq \alpha < \alpha_0$:
\begin{align*}
\text{\textrm{Vol}}(X_{2n}(\alpha)) &=-\int_{\alpha_0}^{\alpha} \frac{l_{\alpha}}{2} \: d\alpha \\
                        &=-\int_{\alpha_0}^{\alpha} \log|L| \: d\alpha\\
                         &=-\int_{\pi}^{\alpha} \log|L| \: d\alpha\\
                         &=\int^{\pi}_{\alpha} \log|L| \: d\alpha\\
                         &=\int^{\pi}_{\alpha}  \log \left|\frac{M^{-2}+x}{ M^2+x}\right|\: d\alpha,                       
\end{align*}
where the first equality comes from the Schl\"{a}fli formula for cone-manifolds (Theorem 3.20 of~\cite{CHK}), the second equality comes from the fact that $l_{\alpha}=|Re(\gamma_{\alpha})|$ is the real length of the longitude of 
$X_{2n}(\alpha)$, the third equality comes from the fact that $\log|L|=0$  for $\alpha_0 < \alpha \leq \pi$ by Equality~\ref{equ:absL} since all the characters are real (the proof of Proposition 6.4 of~\cite{PW}) for $\alpha_0 < \alpha \leq \pi$, and 
$\alpha_0 \in [\frac{2 \pi}{3},\pi)$ is a zero of the discriminant $D(X_{2n}(\alpha))$.

\medskip


\section*{Acknowledgments}
The authors would like to thank Alexander Mednykh and Hyuk Kim for their various helps and anonymous referees.


\begin{thebibliography}{99}

\bibitem{CHK}
Daryl Cooper, Craig~D. Hodgson, and Steven~P. Kerckhoff.
\newblock {\em Three-dimensional orbifolds and cone-manifolds}, volume~5 of
  {\em MSJ Memoirs}.
\newblock Mathematical Society of Japan, Tokyo, 2000.
\newblock With a postface by Sadayoshi Kojima.

\bibitem{DMM1}
D.~Derevnin, A.~Mednykh, and M.~Mulazzani.
\newblock Volumes for twist link cone-manifolds.
\newblock {\em Bol. Soc. Mat. Mexicana (3)}, 10(Special Issue):129--145, 2004.

\bibitem{D1}
Nathan~M. Dunfield.
\newblock Cyclic surgery, degrees of maps of character curves, and volume
  rigidity for hyperbolic manifolds.
\newblock {\em Invent. Math.}, 136(3):623--657, 1999.

\bibitem{F}
Werner Fenchel.
\newblock {\em Elementary geometry in hyperbolic space}, volume~11 of {\em de
  Gruyter Studies in Mathematics}.
\newblock Walter de Gruyter \& Co., Berlin, 1989.
\newblock With an editorial by Heinz Bauer.

\bibitem{FK1}
Stefano Francaviglia and Ben Klaff.
\newblock Maximal volume representations are {F}uchsian.
\newblock {\em Geom. Dedicata}, 117:111--124, 2006.

\bibitem{HL}
Ji-Young Ham and Joongul Lee.
\newblock Explicit formulae for {C}hern-{S}imons invariants of the twist knot
  orbifolds and edge polynomials of twist knots.
\newblock \url{http://arxiv.org/abs/1411.2383}, 2014.
\newblock Preprint.

\bibitem{HMP}
Ji-Young Ham, Alexander Mednykh, and Vladimir Petrov.
\newblock Trigonometric identities and volumes of the hyperbolic twist knot
  cone-manifolds.
\newblock {\em J. Knot Theory Ramifications}, 23(12):1450064, 16, 2014.

\bibitem{HLM1}
Hugh Hilden, Mar{\'{\i}}a~Teresa Lozano, and Jos{\'e}~Mar{\'{\i}}a
  Montesinos-Amilibia.
\newblock On a remarkable polyhedron geometrizing the figure eight knot cone
  manifolds.
\newblock {\em J. Math. Sci. Univ. Tokyo}, 2(3):501--561, 1995.

\bibitem{HLM2}
Hugh~M. Hilden, Mar{\'{\i}}a~Teresa Lozano, and Jos{\'e}~Mar{\'{\i}}a
  Montesinos-Amilibia.
\newblock Volumes and {C}hern-{S}imons invariants of cyclic coverings over
  rational knots.
\newblock In {\em Topology and {T}eichm\"uller spaces ({K}atinkulta, 1995)},
  pages 31--55. World Sci. Publ., River Edge, NJ, 1996.

\bibitem{HS}
Jim Hoste and Patrick~D. Shanahan.
\newblock A formula for the {A}-polynomial of twist knots.
\newblock {\em J. Knot Theory Ramifications}, 13(2):193--209, 2004.

\bibitem{K1}
Sadayoshi Kojima.
\newblock Deformations of hyperbolic {$3$}-cone-manifolds.
\newblock {\em J. Differential Geom.}, 49(3):469--516, 1998.

\bibitem{K2}
Sadayoshi Kojima.
\newblock Hyperbolic {$3$}-manifolds singular along knots.
\newblock {\em Chaos Solitons Fractals}, 9(4-5):765--777, 1998.
\newblock Knot theory and its applications.

\bibitem{M2}
Alexander Mednykh.
\newblock The volumes of cone-manifolds and polyhedra.
\newblock \url{http://mathlab.snu.ac.kr/~top/workshop01.pdf}, 2007.
\newblock {L}ecture {N}otes, Seoul National University.

\bibitem{MR2}
Alexander Mednykh and Aleksey Rasskazov.
\newblock On the structure of the canonical fundamental set for the 2-bridge
  link orbifolds.
\newblock \url{www.mathematik.uni-bielefeld.de/sfb343/preprints/pr98062.ps.gz},
  1998.
\newblock Universität Bielefeld, Sonderforschungsbereich 343, Discrete
  Structuren in der Mathematik, Preprint, 98–062.

\bibitem{MR1}
Alexander Mednykh and Alexey Rasskazov.
\newblock Volumes and degeneration of cone-structures on the figure-eight knot.
\newblock {\em Tokyo J. Math.}, 29(2):445--464, 2006.

\bibitem{MV1}
Alexander Mednykh and Andrei Vesnin.
\newblock On the volume of hyperbolic {W}hitehead link cone-manifolds.
\newblock {\em Sci. Ser. A Math. Sci. (N.S.)}, 8:1--11, 2002.
\newblock Geometry and analysis.

\bibitem{M1}
Alexander~D. Mednykh.
\newblock Trigonometric identities and geometrical inequalities for links and
  knots.
\newblock In {\em Proceedings of the {T}hird {A}sian {M}athematical
  {C}onference, 2000 ({D}iliman)}, pages 352--368. World Sci. Publ., River
  Edge, NJ, 2002.

\bibitem{P2}
Joan Porti.
\newblock Spherical cone structures on 2-bridge knots and links.
\newblock {\em Kobe J. Math.}, 21(1-2):61--70, 2004.

\bibitem{PW}
Joan Porti and Hartmut Weiss.
\newblock Deforming {E}uclidean cone 3-manifolds.
\newblock {\em Geom. Topol.}, 11:1507--1538, 2007.

\bibitem{R1}
Robert Riley.
\newblock Parabolic representations of knot groups. {I}.
\newblock {\em Proc. London Math. Soc. (3)}, 24:217--242, 1972.

\bibitem{R3}
Robert Riley.
\newblock Nonabelian representations of {$2$}-bridge knot groups.
\newblock {\em Quart. J. Math. Oxford Ser. (2)}, 35(138):191--208, 1984.

\bibitem{S}
Horst Schubert.
\newblock Knoten mit zwei {B}r\"ucken.
\newblock {\em Math. Z.}, 65:133--170, 1956.

\end{thebibliography}
\end{document}